\documentclass{amsart}

\usepackage{amssymb}
\usepackage{mathtools}
\usepackage{hyperref}
\usepackage{comment}
\newtheorem{theorem}[equation]{Theorem}

\newtheorem{lemma}[equation]{Lemma}
\newtheorem{corollary}[equation]{Corollary}

\theoremstyle{definition}

\theoremstyle{plain}
\numberwithin{equation}{section}
\numberwithin{figure}{section}

\def\Q{\mathbb Q}
\def\Z{\mathbb Z}

\newcommand{\leg}[2]{\left(\frac{#1}{#2}\right)}
\newcommand{\stir}[2]{\genfrac{[}{]}{0pt}{}{#1}{#2}}

\begin{document}
\title[Legendre-symbol determinants and Bernoulli numbers]{Two Legendre-symbol determinants of Sun and Bernoulli numbers}
\author{Hang LIU}
\address[Hang Liu]{School of Mathematical Sciences, Shenzhen University,
Shenzhen, 518060, Guangdong, P. R. China}
\email{liuhang@szu.edu.cn}

\subjclass[2020]{Primary 11C20; Secondary 11B68, 11R29, 15A15}
\keywords{Legendre symbols, determinants, quadratic residues, Bernoulli numbers,
Stirling numbers, class numbers}

\begin{abstract}
Let $p>5$ be an odd prime, put $n=(p-1)/2$, and let $\leg{\cdot}{p}$
be the Legendre symbol. Define $D_p^{\pm}$ to be the following determinants
\[
\begin{aligned}
D_p^{\pm}&:=\det\left[(i\pm j)\leg{i\pm j}{p}\right]_{1\leq i,j\leq n}.
\end{aligned}
\] 
In this article, we give explicit formulas for these determinants modulo $p$ in terms of Bernoulli numbers.
As a consequence of Reinhart's recent counterexample to the Ankeny--Artin--Chowla conjecture, we find a prime $p$ such that $D_p^{\pm} \equiv 0 \pmod p$. This gives a negative answer to a question of the Sun.
\end{abstract}

\maketitle

\section{Introduction}
Determinants with entries involving Legendre symbols are studied extensively and have deep connections with arithmetic invariants such as class numbers and fundamental units of quadratic fields (see, e.g., \cite{Chapman2004}, \cite{Sun2019}, \cite{GrinbergSunZhao2022}).

Let \(p>5\) be a prime and \(n=(p-1)/2\). Sun considered the following determinants in \cite[Conjecture~4.5(2)]{Sun2019}
$$
D_p^{\pm}:=\det\left[(i\pm j)\leg{i\pm j}{p}\right]_{1 \leq i, j \leq n}.
$$
He conjectured $D_p^{\pm} \not \equiv 0 \pmod p$, and $\leg{D_p^{\pm}}{p}=1$ for $p \equiv 1\pmod 4$.

In this paper, we will explicitly evaluate $D_p^{\pm}$ modulo $p$ in terms of Bernoulli numbers.

\begin{theorem}\label{thm:Dp}
Let \(p>5\) be a prime, and \(n=(p-1)/2\).  Then we have the following
congruences of $D_p^{+}$ and $D_p^{-}$ modulo \(p\):
\[
D_p^{+}\equiv
\begin{cases}
-\dfrac{9}{4}B_n^{\,2},
& p\equiv 1 \pmod 8,\\[6pt]
\phantom{-}\dfrac{9}{4}B_n^{\,2},
& p\equiv 5 \pmod 8,\\[6pt]
(-1)^{(h(-p)-1)/2}\dfrac{3}{8}B_{n-1}B_{n+1},
& p\equiv 3 \pmod 4,
\end{cases}
\pmod p,
\]
and
\[
D_p^{-}\equiv
\begin{cases}
-\dfrac{9}{4}B_n^{\,2},
& p\equiv 1 \pmod 8,\\[6pt]
(-1)^{(h(-p)-1)/2}
\dfrac{5}{32}B_{n-1}\bigl(24B_{n+1}-5B_{n-1}\bigr),
& p\equiv 3 \pmod 8,\\[6pt]
-\dfrac{81}{4}B_n^{\,2},
& p\equiv 5 \pmod 8,\\[6pt]
(-1)^{(h(-p)-1)/2}\dfrac{9}{32}B_{n-1}^{\,2},
& p\equiv 7 \pmod 8.
\end{cases}
\pmod p.
\]
Here \(B_m\) denotes the \(m\)-th Bernoulli number and \(h(-p)\) denotes the class number of
\(\Q(\sqrt{-p})\). All rational constants are interpreted modulo \(p\).
\end{theorem}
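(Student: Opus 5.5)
Working in $\mathbb F_p$, I will use Euler's criterion $\leg{x}{p}\equiv x^{n}$, so that $(i\pm j)\leg{i\pm j}{p}\equiv (i\pm j)^{n+1}\pmod p$. Thus $D_p^{\pm}\equiv\det[(i\pm j)^{n+1}]_{1\le i,j\le n}$. This is a determinant of a polynomial kernel of degree $n+1$ evaluated at the $n$ points $1,\dots,n$.

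\textbf{Step 1 (rank factorization).} Expand by the binomial theorem:
\[
(i\pm j)^{n+1}=\sum_{k=0}^{n+1}\binom{n+1}{k}(\pm1)^{n+1-k} i^k j^{n+1-k}.
\]
The $n\times n$ matrix therefore factors as $V C W^{T}$. Here $V=[i^k]$ and $W=[j^{k}]$ are $n\times(n+2)$ Vandermonde-type matrices with $k=0,\dots,n+1$, and $C$ is the anti-diagonal matrix of signed binomial coefficients. By Cauchy--Binet, $D_p^{\pm}$ is a sum over pairs of $n$-subsets $S,T\subset\{0,\dots,n+1\}$ of $\det V_S\cdot\det C_{S,T}\cdot\det W_T$. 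Since $C$ is anti-diagonal, only $T=\{n+1-s: s\in S\}$ contributes. Each subset omits exactly two exponents $\{a,b\}$, so there are $\binom{n+2}{2}$ terms.

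\textbf{Step 2 (generalized Vandermonde minors).} The minor of $[i^k]_{1\le i\le n}$ on the columns $\{0,\dots,n+1\}\setminus\{a,b\}$ equals $\prod_{i<i'}(i'-i)$ times a Schur polynomial in $x_i=i$ of shape at most two rows or columns. Concretely, it is a combination of elementary symmetric functions $e_r(1,\dots,n)$, namely a $2\times 2$ Jacobi--Trudi determinant in the $e_r$. The factor $\prod(i'-i)$ appears squared, together with sign and binomial data. Modulo $p$, the $e_r(1,\dots,n)$ are unsigned Stirling numbers $\stir{n+1}{n+1-r}$. I will relate these to power sums $\sum_{i=1}^n i^m$ via Newton's identities. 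The power sums are in turn given modulo $p$ by Bernoulli numbers: $\sum_{i=1}^{n} i^m \equiv \frac{B_{m+1}(1/2)-B_{m+1}}{m+1}$, which reduces to $(2^{-m}-2)B_{m+1}/(m+1)$-type expressions.

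An alternative, likely cleaner, route is to use the basis of polynomials vanishing on the \emph{full} residue set. The set $\{1,\dots,n\}$ is half of $\mathbb F_p^*$, and $\prod_{i=1}^n(x-i^2)\equiv x^n-1$ links it to quadratic residues. Either way, only the omitted pairs $\{a,b\}$ close to the top, with $a+b$ near $n+1$, survive with nonzero coefficients. The remaining ones involve $e_r$ with $r$ small or large, which I can express through $\sum i^{n-1},\sum i^{n},\sum i^{n+1}$ and hence through $B_{n-1},B_n,B_{n+1}$ via the congruence above.

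\textbf{Step 3 (evaluate the squared Vandermonde and case split).} The factor $\prod_{1\le i<i'\le n}(i'-i)^2=\prod_{k=1}^{n-1}k!^2$ and the residual signs must be evaluated modulo $p$. This uses Wilson-type identities, specifically $(n!)^2\equiv(-1)^{n+1}$, and for $p\equiv3\pmod4$, Mordell's $n!\equiv(-1)^{(h(-p)+1)/2}$. This is where $(-1)^{(h(-p)-1)/2}$ enters. The parity of $n$ decides which Bernoulli numbers are nonzero: for $p\equiv1\pmod4$, $n$ is even and $B_{n\pm1}=0$, so only $B_n^2$ survives; for $p\equiv3\pmod4$, $B_n=0$ and $B_{n-1},B_{n+1}$ remain. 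The factors $2^{-m}$ give $2^{n}\equiv\leg{2}{p}$, which produces the split modulo $8$. The differing constants for $D^+$ and $D^-$ come from the signs $(\pm1)^{n+1-k}$.

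The main obstacle is Step 2: organizing the $\binom{n+2}{2}$ Cauchy--Binet terms so that all but a bounded number cancel or vanish modulo $p$, leaving a $2\times2$ determinant in the power sums. I would first try to compute $\det[(x_i+ y_j)^{n+1}]$ for generic $x,y$ as $\Delta(x)\Delta(y)$ times an explicit degree-$2$ symmetric expression. This is a classical formula, essentially a sum of $h_2$-type terms. I would then specialize it and reduce it modulo $p$ using the Bernoulli power-sum congruences.
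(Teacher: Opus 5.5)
Your Steps 1 and 3 match the paper: Euler's criterion, the binomial factorization plus Cauchy--Binet, the minors as two-row Schur polynomials via dual Jacobi--Trudi, and the scalar factor $\prod_{j<n}j!^2\prod_r\binom{n+1}{r}$ evaluated with Wilson, Mordell and $2^n\equiv\leg{2}{p}$. The gap is in Step 2, which is the heart of the proof, and the mechanism you propose there does not work.

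There is no termwise collapse of the $\binom{n+2}{2}$ Cauchy--Binet terms modulo $p$. The weights $1/\binom{n+1}{a}\binom{n+1}{b}$ are $p$-adic units since $n+1<p$, and the numbers $e_r(1,\dots,n)=\stir{n+1}{n+1-r}$ are in general nonzero modulo $p$. Concretely, remove the common factor $\Delta(x)\Delta(y)\prod_r\binom{n+1}{r}$. The term with $\{a,b\}=\{0,1\}$ is neither near the top nor has $a+b$ near $n+1$, yet it equals
\[
\frac{e_n(1,\dots,n)^2}{n+1}=\frac{(n!)^2}{n+1}\equiv 2(-1)^{n+1}\not\equiv 0\pmod p .
\]
Converting to power sums by Newton's identities does not rescue this. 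Each $e_r$ becomes a polynomial in all of $\sum_i i^m$ with $m\le r$, hence in products of many different Bernoulli numbers. Your sketch gives no reason why everything should collapse to $B_{n-1},B_n,B_{n+1}$. Also, the quotient $\det[(x_i+y_j)^{n+1}]/\Delta(x)\Delta(y)$ has degree $2n$, not $2$; the correct structural fact is that it is a constant times a $2\times 2$ determinant.

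What you are missing is the paper's two-stage mechanism. First, a second application of Cauchy--Binet turns $\sum_{a<b}$ into $\det C(x,y)$, where
\[
C_{ij}=\sum_{r=0}^{n+1}\frac{e_{r+1-i}(y)\,e_{n-r+j-1}(x)}{\binom{n+1}{r}} .
\]
This is a $2\times2$ determinant of convolution sums in which every $r$ contributes.

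Second, write $F(t)=t/(e^t-1)$ and $F(t)^{n+1}=\sum_r B_r^{(n+1)}t^r/r!$, and use the relation $\stir{n+1}{n+1-r}=(-1)^r\binom nr B_r^{(n+1)}$. Combine it with $\binom nr\binom n{n-r}/\binom{n+1}{r}=\frac{1+n-r}{n+1}\binom nr$ and $\theta F(t)=F(t)(1-F(-t))$, where $\theta=t\frac{d}{dt}$. Then each entry becomes a coefficient of $F(t)^{2n+2}$ or $(F(t)F(-t))^{n+1}$, multiplied by a low-degree polynomial in $F(\pm t)$.

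The decisive step is then
\[
F(t)^{2n+2}=F(t)^{p+1}\equiv F(t),\qquad (F(t)F(-t))^{n+1}=e^{(n+1)t}F(t)^{2n+2}\equiv e^{t/2}F(t)\pmod{(p,t^{n+2})}.
\]
This is exactly what leaves only $B_{n-1},B_n,B_{n+1}$ and $B_m(1/2)=(2^{1-m}-1)B_m$; the latter is the source of the extra mod-$8$ dependence of $D_p^-$. Without this collapse, or an equivalent one, your outline does not reach the stated congruences.
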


Let $p\equiv 1 \pmod 4$ and $K=\Q(\sqrt{p})$. Put \(\omega=(1+\sqrt p)/2\), and let
\(\varepsilon=x+y\omega>1\), with \(x,y\in\Z\), be the fundamental
unit of \(\mathcal O_K\).
The Ankeny--Artin--Chowla conjecture \cite{AnkenyArtinChowla1952} predicts that $p \nmid y$. However, Reinhart~\cite{Reinhart2024} recently found a counterexample to this conjecture, namely
$$p = 331914313984493.$$

Let $h(p)$ be the class number of $K$. The classical congruence $h(p)y\equiv (2x+y)B_{(p-1)/2} \pmod p$ shows $p \mid B_{(p-1)/2}$ for this prime.
Since $p\equiv 5 \pmod 8$, Theorem \ref{thm:Dp} shows $D_p^{\pm} \equiv 0 \pmod p$ which gives a counterexample to Sun's conjecture stated above.

The proof has two main ingredients. First, we reduce \(\det[(x_i+y_j)^{n+1}]_{1\leq i,j\leq n}\) to a \(2\times2\) determinant. Specializing $x_i=i$ and
$y_j=\pm j$, and applying Euler's criterion, then yields a formula
for $D_p^\pm\pmod p$. The entries in the \(2\times2\) determinant are sums involving unsigned Stirling numbers of the first kind.
Second, we express these entries as coefficients of power series involving powers of $F(t)=t/(e^t-1)$. These power series simplify modulo $p$, and their coefficients can then be evaluated in terms of Bernoulli numbers.

In Sections 2 and 3, we carry out the first and second steps, respectively. In Section 4, we complete the proof of Theorem \ref{thm:Dp} by combining the two ingredients.

\section{A formula for $\det[(x_i+y_j)^{n+1}]_{1\leq i,j\leq n}$ and a reduction of $D_p^{\pm} \pmod p$}
In this section, we reduce
$\det\bigl[(x_i+y_j)^{n+1}\bigr]_{1\le i,j\le n}$
to a $2\times2$ determinant and then specialize the resulting formula
to $D_p^\pm\pmod p$. We first use the binomial theorem to factor the
matrix as the product of an $n\times(n+2)$ matrix and an
$(n+2)\times n$ matrix. The Cauchy--Binet formula then expresses its
determinant as a sum involving two Vandermonde-type determinants. Using
the dual Jacobi--Trudi identity, we rewrite these determinants in terms
of elementary symmetric polynomials. A second application of the
Cauchy--Binet formula completes the reduction.

Now we need some notation. The $k$th elementary symmetric polynomial $e_k$ in \(z_1,\ldots,z_n\) is defined by 
$$
e_k(z_1,\ldots,z_n)
=
\sum_{1\leq i_1<\cdots<i_k\leq n}
\prod_{j=1}^{k}z_{i_j}.
$$
In addition, we use the conventions $e_0(z_1,\ldots,z_n)=1$ and $e_k(z_1,\ldots,z_n)=0$ for $k<0$ or $k>n$.
We also write
$$
\Delta(z_1,\ldots,z_n)
=
\prod_{1\leq i<j\leq n}(z_j-z_i).
$$
When there is no confusion, we abbreviate
\(e_k(x_1,\ldots,x_n)\), \(e_k(y_1,\ldots,y_n)\),
\(\Delta(x_1,\ldots,x_n)\), and \(\Delta(y_1,\ldots,y_n)\) to
\(e_k(x)\), \(e_k(y)\), \(\Delta(x)\), and \(\Delta(y)\), respectively.

\begin{theorem}
\label{thm:xy-det}
Let \(n\) be a positive integer, and let
\(x_1,\ldots,x_n,y_1,\ldots,y_n\) be indeterminates.  
Let \(C(x,y)=[C_{ij}(x,y)]_{1\leq i,j\leq2}\) be the $2 \times 2$ matrix with entries
$$
C_{ij}(x,y)
=
\sum_{r=0}^{n+1}
\frac{
e_{r+1-i}(y)\,
e_{n-r+j-1}(x)
}{
\binom{n+1}{r}
}.
$$
Then we have
\begin{equation}
\label{eq:xy-det}
\begin{aligned}
\det\bigl[(x_i+y_j)^{n+1}\bigr]_{1\leq i,j\leq n}
&=
(-1)^{n(n-1)/2}
\Delta(x)\Delta(y) \\
&\qquad\cdot
\left(\prod_{r=0}^{n+1}\binom{n+1}{r}\right)
\det C(x,y).
\end{aligned}
\end{equation}
\end{theorem}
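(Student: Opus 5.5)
We follow the route sketched at the start of this section. First expand each entry by the binomial theorem:
\[
(x_i+y_j)^{n+1}=\sum_{r=0}^{n+1}\binom{n+1}{r}x_i^{\,n+1-r}y_j^{\,r}.
\]
This gives $M=XDY$, where $X=[x_i^{n+1-r}]$ is $n\times(n+2)$, $D=\mathrm{diag}\bigl(\binom{n+1}{r}\bigr)_{0\le r\le n+1}$, and $Y=[y_j^{r}]$ is $(n+2)\times n$. By Cauchy--Binet, $\det M$ is a sum over $n$-subsets $S\subset\{0,\dots,n+1\}$, i.e.\ over complements $\{a<b\}$, of
\[
\Bigl(\prod_{r\in S}\binom{n+1}{r}\Bigr)\det X_{\cdot,S}\,\det Y_{S,\cdot}.
\]
We write $\prod_{r\in S}\binom{n+1}{r}=\prod_{r=0}^{n+1}\binom{n+1}{r}\big/\bigl(\binom{n+1}{a}\binom{n+1}{b}\bigr)$, which pulls out the global product appearing in \eqref{eq:xy-det}.

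Next we evaluate the two Vandermonde-type minors. The determinant $\det[y_j^{r}]_{r\in S}$ with exponents $\{0,\dots,n+1\}\setminus\{a,b\}$ is $\Delta(y)$ times a Schur function whose partition has at most two columns. By the dual Jacobi--Trudi identity (or directly, via the expansion $\prod_j(t-y_j)=\sum_k(-1)^k e_k t^{n-k}$), this equals $\Delta(y)$ times a $2\times2$ determinant in elementary symmetric polynomials. Concretely, we expect
\[
\det Y_{S,\cdot}=\pm\Delta(y)\bigl(e_{a}(y)e_{b-1}(y)-e_{a-1}(y)e_{b}(y)\bigr).
\]
Similarly, $X$ uses exponents $n+1-r$, so removing $\{a,b\}$ means removing exponents $\{n+1-b,\,n+1-a\}$. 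This gives a $2\times2$ determinant in $e_{n-a}(x),e_{n+1-a}(x),e_{n-b}(x),e_{n+1-b}(x)$ times $\pm\Delta(x)$. The column order of $X$ is reversed relative to the natural increasing order, which contributes the sign $(-1)^{n(n-1)/2}$, independent of $(a,b)$. We must also check that the signs coming from the position of the removed rows, namely $(-1)^{a+b}$-type factors, appear identically in both minors and hence cancel.

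After these steps $\det M$ equals $(-1)^{n(n-1)/2}\Delta(x)\Delta(y)\prod_r\binom{n+1}{r}$ times
\[
\sum_{a<b}\frac{P_{ab}\,Q_{ab}}{\binom{n+1}{a}\binom{n+1}{b}},
\]
where $P_{ab}$ and $Q_{ab}$ are the $2\times2$ minors (columns $a,b$) of the $2\times(n+2)$ matrices
\[
U=\bigl[e_{r+1-i}(y)\bigr]_{i,r},\qquad V=\Bigl[e_{n-r+j-1}(x)\big/\tbinom{n+1}{r}\Bigr]_{r,j}.
\]
A second application of Cauchy--Binet, in reverse, identifies this sum with $\det(UV)$. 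Since $(UV)_{ij}=C_{ij}(x,y)$ exactly, the identity follows.

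The main obstacle is bookkeeping: pinning down the precise index shifts in the dual Jacobi--Trudi step, i.e.\ matching which of $e_{a},e_{a-1}$ (respectively $e_{n-a},e_{n+1-a}$) pairs with row $i$, and tracking the global sign. We plan to fix conventions by checking small cases, e.g.\ $n=1$, where $\det M=(x_1+y_1)^2$ and $C$ can be computed directly. We also need $e_{-1}=0$ to handle the boundary case $a=0$.
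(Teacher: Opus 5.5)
Your plan follows the paper's proof: binomial factorization, Cauchy--Binet, dual Jacobi--Trudi for two-column shapes, then Cauchy--Binet in reverse. However, the concrete minor formulas you wrote are wrong for the expansion you chose, so as written the argument does not produce $C(x,y)$.

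With $Y=[y_j^{r}]$, deleting rows $a<b$ leaves the $y$-exponents $\{0,\dots,n+1\}\setminus\{a,b\}$ in increasing order. Hence $\det Y_{S,\cdot}=\Delta(y)s_\lambda(y)$ with $\lambda=(2^{n+1-b},1^{b-a-1})$ and $\lambda'=(n-a,n+1-b)$, that is
\[
\det Y_{S,\cdot}=\Delta(y)\bigl(e_{n-a}(y)e_{n+1-b}(y)-e_{n+1-a}(y)e_{n-b}(y)\bigr),
\]
not $\pm\Delta(y)\bigl(e_a(y)e_{b-1}(y)-e_{a-1}(y)e_b(y)\bigr)$. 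Already for $n=1$ and $\{a,b\}=\{0,1\}$, the minor is $y_1^2$ while your expression is $\pm1$.

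Symmetrically, the $x$-minor has deleted exponents $n+1-b<n+1-a$ and shape $(2^a,1^{b-a-1})$. It equals $(-1)^{n(n-1)/2}\Delta(x)\bigl(e_a(x)e_{b-1}(x)-e_{a-1}(x)e_b(x)\bigr)$. So the roles of $x$ and $y$ in your $U,V$ are interchanged, and the reverse Cauchy--Binet step, done correctly, yields $\det C(y,x)$ rather than $\det C(x,y)$. Also, the factor $1/\bigl(\binom{n+1}{a}\binom{n+1}{b}\bigr)$ should sit either in the sum or inside $V$, not both.

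The repair is short. One option is to expand as $\sum_r\binom{n+1}{r}x_i^{r}y_j^{n+1-r}$, as the paper does. Equivalently, reindex $r\mapsto n+1-r$, which is harmless since $\binom{n+1}{r}=\binom{n+1}{n+1-r}$. Then the deleted indices $a<b$ are exponents of $x$, and $n+1-b<n+1-a$ are exponents of $y$. The other option is to keep your expansion, obtain the identity with $C(y,x)$, and apply it to the transpose $[(y_i+x_j)^{n+1}]$, which has the same determinant, to swap the variable sets.

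Your proposed $n=1$ check of the final identity cannot detect this error. Since $\det[(x_i+y_j)^{n+1}]$ and $\Delta(x)\Delta(y)$ are invariant under $x\leftrightarrow y$, the statement itself forces $\det C(x,y)=\det C(y,x)$ for every $n$. Only checking an individual minor exposes the swap.

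Finally, there are no $(-1)^{a+b}$ factors to cancel on this route. A generalized Vandermonde determinant with increasing exponents is exactly $\Delta\cdot s_\lambda$, so the only sign is the $(-1)^{n(n-1)/2}$ from reversing the decreasing exponents.
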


\begin{proof}
Define an $n \times (n+2)$ matrix $U$ and an $(n+2) \times n$ matrix $V$ by
$$U=\left[\binom{n+1}{r}x_i^r\right]_{\substack{1\leq i\leq n \\ 0\leq r\leq n+1}} \text{  and  } V=\left[y_j^{n+1-r}\right]_{\substack{0\leq r\leq n+1 \\ 1\leq j\leq n}}.$$

The binomial theorem yields
$$
(x_i+y_j)^{n+1}
=
\sum_{r=0}^{n+1}\binom{n+1}{r}x_i^r y_j^{n+1-r},
$$
which gives
$$\bigl[(x_i+y_j)^{n+1}\bigr]_{1\leq i,j\leq n} = UV.$$

By the Cauchy--Binet formula, we therefore get
\begin{equation}\label{eq:detsimple}
\det[(x_i+y_j)^{n+1}]
=
\sum_{0\leq a<b\leq n+1}
\left(\prod_{\substack{0\leq r\leq n+1\\ r\neq a,b}}\binom{n+1}{r}\right)
X_{a,b}(x)Y_{a,b}(y),  
\end{equation}
where $X_{a,b}(x)$ and $Y_{a,b}(y)$ are the following $n \times n$ determinants
\begin{align*}
X_{a,b}(x)
&=
\det\bigl[x_i^r\bigr]_{\substack{1\leq i\leq n\\
r\in\{0,\ldots,n+1\}\setminus\{a,b\}}}, \\
Y_{a,b}(y)
&=
\det\bigl[y_j^{n+1-r}\bigr]_{\substack{
r\in\{0,\ldots,n+1\}\setminus\{a,b\}\\
1\leq j\leq n}}.
\end{align*}

We now evaluate $X_{a,b}(x)$.  Let \(r_1<r_2<\cdots<r_n\) be the
elements of \(\{0,1,\ldots,n+1\}\setminus\{a,b\}\).
Then we have 
\begin{equation}\label{eq:Xab}
  X_{a,b}(x) = \Delta(x_1,\ldots,x_n)s_\lambda(x_1,\ldots,x_n),
\end{equation}
where $s_\lambda$ is the Schur polynomial and $\lambda=(2^{\,n+1-b},1^{\,b-a-1})$ is a partition with exponents denoting multiplicities.
By the dual Jacobi--Trudi identity~\cite[Chapter~I, Section~3]{Macdonald1995}, we have
\begin{equation}\label{eq:sx}
s_\lambda(x_1,\ldots,x_n)
=
\det\bigl[e_{\lambda'_i-i+j}(x_1,\ldots,x_n)\bigr]_{1\leq i,j\leq2},
\end{equation}
where $\lambda'=(n-a,\ n+1-b)$ is the dual partition of $\lambda$.

Hence, combining \eqref{eq:Xab} and \eqref{eq:sx} we have
$$
X_{a,b}(x)
=
\Delta(x_1,\ldots,x_n)
\begin{vmatrix}
e_{n-a}(x) & e_{n-a+1}(x)\\
e_{n-b}(x) & e_{n-b+1}(x)
\end{vmatrix}.
$$

Similarly, for \(Y_{a,b}(y)\), the exponents are \(n+1-r\).  Reversing their
order contributes the sign \((-1)^{n(n-1)/2}\).  After the reversal, the
deleted exponents are \(n+1-b<n+1-a\). Thus the corresponding Schur polynomial has partition $\mu=(2^a,1^{\,b-a-1})$ with dual $\mu'=(b-1,\ a)$.
Therefore we have
$$
Y_{a,b}(y)
=
(-1)^{n(n-1)/2}
\Delta(y_1,\ldots,y_n)
\begin{vmatrix}
e_a(y) & e_b(y)\\
e_{a-1}(y) & e_{b-1}(y)
\end{vmatrix}.
$$

Substituting these evaluations into \eqref{eq:detsimple} gives
\begin{equation}
\label{eq:xy-det-after-minors}
\begin{aligned}
\det[(x_i+y_j)^{n+1}]
&=
(-1)^{n(n-1)/2}\Delta(x)\Delta(y)
\left(\prod_{r=0}^{n+1}\binom{n+1}{r}\right)\\
& \times
\sum_{0\leq a<b\leq n+1}
\frac{1}{\binom{n+1}{a}\binom{n+1}{b}}
\begin{vmatrix}
e_{n-a}(x) & e_{n-a+1}(x)\\
e_{n-b}(x) & e_{n-b+1}(x)
\end{vmatrix}\cdot
\begin{vmatrix}
e_a(y) & e_b(y)\\
e_{a-1}(y) & e_{b-1}(y)
\end{vmatrix}.
\end{aligned}
\end{equation}

Let \(A\) be the \(2\times(n+2)\) matrix and \(B\) the
\((n+2)\times2\) matrix defined by
\begin{equation*}
A=
\left[
\frac{e_{r+1-i}(y)}{\binom{n+1}{r}}
\right]_{\substack{1\le i\le 2\\ 0\le r\le n+1}} \text{   and   }
B=
\left[
e_{n-r+j-1}(x)
\right]_{\substack{0\le r\le n+1\\ 1\le j\le 2}}.
\end{equation*}
Applying the Cauchy--Binet formula again, we have
\begin{align*}
&\sum_{0\le a<b\le n+1}
\frac{1}{\binom{n+1}{a}\binom{n+1}{b}}
\begin{vmatrix}
e_{n-a}(x) & e_{n-a+1}(x)\\
e_{n-b}(x) & e_{n-b+1}(x)
\end{vmatrix}
\begin{vmatrix}
e_a(y) & e_b(y)\\
e_{a-1}(y) & e_{b-1}(y)
\end{vmatrix} \\
&\quad =
\sum_{0\le a<b\le n+1}
\det A_{\{a,b\}}\det B_{\{a,b\}} \\
&\quad =
\det(AB)\\
&\quad =
\det
\begin{pmatrix}
\displaystyle\sum_{r=0}^{n+1}
\frac{e_r(y)e_{n-r}(x)}{\binom{n+1}{r}}
&
\displaystyle\sum_{r=0}^{n+1}
\frac{e_r(y)e_{n-r+1}(x)}{\binom{n+1}{r}}
\\[4mm]
\displaystyle\sum_{r=0}^{n+1}
\frac{e_{r-1}(y)e_{n-r}(x)}{\binom{n+1}{r}}
&
\displaystyle\sum_{r=0}^{n+1}
\frac{e_{r-1}(y)e_{n-r+1}(x)}{\binom{n+1}{r}}
\end{pmatrix} \\
&\quad =
\det C(x,y).
\end{align*}
The theorem now follows.
\end{proof}

Next we want to give a formula of $D_p^{\pm} \pmod p$ by specializing $x_i=i$ and $y_j=\pm j$ in Theorem~\ref{thm:xy-det}. But before this, we give a formula for the scaling factor in the Theorem after the specialization.
\begin{lemma}
\label{lem:scalar-factor}
Let \(p>3\) be a prime, $n=(p-1)/2$ and let \(h(-p)\) denote the class number of \(\Q(\sqrt{-p})\).
Define
$$
P_n=
\left(\prod_{j=1}^{n-1}j!\right)^2
\left(\prod_{r=0}^{n+1}\binom{n+1}{r}\right),
$$
and
$$
\varepsilon_p
=
\begin{cases}
-1, & p\equiv1\pmod4,\\[4pt]
\displaystyle
\leg{2}{p}(-1)^{(h(-p)+1)/2}, & p\equiv3\pmod4,
\end{cases}
$$
where \(\leg{2}{p}\) is the Legendre symbol.  Then we have
$$
P_n\equiv\varepsilon_p\pmod p.
$$
\end{lemma}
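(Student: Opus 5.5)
The plan is to first find a closed form for $P_n$ as a rational number, and only then reduce it modulo $p$. The reduction will use Wilson's theorem together with Mordell's congruence for $((p-1)/2)!$.

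\textbf{Step 1: simplify the product of binomials.} For any $m\ge 0$, writing $\binom{m}{r}=m!/(r!(m-r)!)$ and pairing the factors $r!$ and $(m-r)!$ gives
\[
\prod_{r=0}^{m}\binom{m}{r}=\frac{(m!)^{m+1}}{\left(\prod_{r=0}^{m}r!\right)^2}.
\]
Take $m=n+1$. The denominator is $\left(\prod_{r=0}^{n+1}r!\right)^2=\left(\prod_{j=1}^{n-1}j!\right)^2\,(n!)^2\,((n+1)!)^2$. The factor $\left(\prod_{j=1}^{n-1}j!\right)^2$ in $P_n$ therefore cancels, leaving
\[
P_n=\frac{((n+1)!)^{n}}{(n!)^2}=(n+1)^n\,(n!)^{n-2}.
\]

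\textbf{Step 2: reduce modulo $p$.} Since $n+1=(p+1)/2\equiv 2^{-1}\pmod p$, Euler's criterion gives
\[
(n+1)^n\equiv 2^{-n}\equiv\leg{2}{p}\pmod p.
\]
Wilson's theorem in the form $(n!)^2\equiv(-1)^{n+1}\pmod p$ handles the factor $(n!)^{n-2}$. We now split into the two cases of the statement.

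If $p\equiv1\pmod4$, write $p=4k+1$, so $n=2k$ is even. Then $(n!)^{n-2}=((n!)^2)^{k-1}\equiv(-1)^{k-1}$. Also $\leg{2}{p}=(-1)^{(p^2-1)/8}=(-1)^{k}$ by the supplementary law. Hence $P_n\equiv(-1)^{k}(-1)^{k-1}=-1$, as claimed.

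If $p\equiv3\pmod4$, then $n$ is odd, so $(n!)^2\equiv1$ and $(n!)^{n-2}\equiv n!$, since $n-2$ is odd. Here we invoke Mordell's classical result: for a prime $p>3$ with $p\equiv3\pmod4$,
\[
\left(\tfrac{p-1}{2}\right)!\equiv(-1)^{(h(-p)+1)/2}\pmod p.
\]
This yields $P_n\equiv\leg{2}{p}(-1)^{(h(-p)+1)/2}=\varepsilon_p$.

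The only non-elementary input is Mordell's congruence, which we cite rather than reprove. The main point of care is the bookkeeping in Step 1: the products of factorials must cancel exactly, in particular the index ranges $0\le r\le n+1$ against $1\le j\le n-1$. A quick check of the closed form at $n=2$ is worthwhile: $P_2=(1!)^2\cdot(1\cdot3\cdot3\cdot1)=9$, and $3^2\cdot(2!)^0=9$, which agree.
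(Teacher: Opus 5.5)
Your proposal is correct and follows the paper's proof essentially step for step. You use the same closed form $P_n=(n+1)^n(n!)^{n-2}$, Euler's criterion for $(n+1)^n$, Wilson's theorem for $(n!)^2$, and Mordell's congruence when $p\equiv 3\pmod 4$; you also spell out the case $p\equiv 1\pmod 4$, which the paper calls ``easy to check'', via the supplementary law $\leg{2}{p}=(-1)^{k}$ for $p=4k+1$.
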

\begin{proof}
We have 
\begin{equation}\label{eq:pn}
P_n = \left(\prod_{j=1}^{n-1}j!\right)^2 \frac{((n+1)!)^{n+2}}
{\left(\prod_{r=0}^{n+1}r!\right)^2} 
=
\frac{((n+1)!)^{n+2}}{(n!)^2(n+1)!^2}
=
(n+1)^n(n!)^{n-2}.
\end{equation}
Since $n+1=(p+1)/2\equiv 2^{-1} \pmod p$, Euler's criterion gives
\begin{equation}\label{eq:nn}
(n+1)^n
\equiv
2^{-n} 
\equiv
\leg{2}{p}
\pmod p.
\end{equation}
On the other hand, Wilson's theorem gives
\begin{equation}\label{eq:nf}
(n!)^2\equiv(-1)^n(p-1)!\equiv(-1)^{n+1}\pmod p.
\end{equation}

If $p\equiv 1 \pmod 4$, then $n$ is even. 
Substituting \eqref{eq:nn} and \eqref{eq:nf} into \eqref{eq:pn}, we have
$$
P_n
\equiv
\leg{2}{p}(-1)^{(n-2)/2}
\pmod p.
$$
It is easy to check \(P_n\equiv -1\pmod p\) in this case.

If \(p\equiv3\pmod4\), then \(n\) is odd.
Similarly, we have
$$
P_n
\equiv
\leg{2}{p}n! = \leg{2}{p}\left(\frac{p-1}{2}\right)!
\pmod p.
$$
By the classical congruence of Mordell~\cite{Mordell1961}, we have
$\left(\frac{p-1}{2}\right)!
\equiv
(-1)^{(h(-p)+1)/2}
\pmod p$ for \(p\equiv3\pmod4\),
which gives
$$
P_n
\equiv
\leg{2}{p}(-1)^{(h(-p)+1)/2}
\pmod p.
$$
The two cases prove the lemma.
\end{proof}

Now we introduce some notation.
Let \(\stir{m}{k}\) be the unsigned Stirling numbers of the first kind, defined by
$$
t(t+1)\cdots(t+m-1)
=
\sum_{k=0}^m \stir{m}{k}t^k.
$$
We use the convention
$\stir{m}{k}=0$
for $k<0$ or $k>m$.
By the definition of unsigned Stirling numbers and Vieta's formulas, it is easy to see \(\stir{m}{k}=e_{m-k} (1,2,\ldots,m-1)\).

\begin{corollary}
\label{cor:sun-det-stirling}
Let \(p>3\) be a prime, and \(n=(p-1)/2\).  Let
\(\varepsilon_p\) be as in Lemma~\ref{lem:scalar-factor}.  Define two
\(2\times2\) matrices \(A_p^+\) and \(A_p^-\) by
$$
(A_p^+)_{ij}
=
\sum_{r=0}^{n+1}
\frac{
\stir{n+1}{n-r+i}\stir{n+1}{r-j+2}
}{
\binom{n+1}{r}
}
\qquad(1\leq i,j\leq2),
$$
and
$$
(A_p^-)_{ij}
=
\sum_{r=0}^{n+1}
\frac{
(-1)^{r+1-i}\stir{n+1}{n-r+i}\stir{n+1}{r-j+2}
}{
\binom{n+1}{r}
}
\qquad(1\leq i,j\leq2).
$$
Then we have
$$
D_p^+
\equiv
(-1)^{n(n-1)/2}\varepsilon_p\det A_p^+
\pmod p,
$$
and
$$
D_p^-
\equiv
\varepsilon_p\det A_p^-
\pmod p,
$$
where the fractions are interpreted modulo \(p\).
\end{corollary}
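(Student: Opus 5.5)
The plan is to specialize Theorem~\ref{thm:xy-det} at $x_i=i$ and $y_j=\pm j$, after first using Euler's criterion to turn each entry of $D_p^{\pm}$ into an $(n+1)$-st power. For any integer $m$ we have $\leg{m}{p}\equiv m^{n}\pmod p$, hence $m\leg{m}{p}\equiv m^{n+1}\pmod p$. This also holds when $p\mid m$, since both sides are then $\equiv 0$. It holds for negative $m$ as well, which matters for $D_p^-$ where $i-j$ can be negative or zero. Therefore
$$
D_p^{\pm}\equiv \det\bigl[(i\pm j)^{n+1}\bigr]_{1\le i,j\le n}\pmod p .
$$
Theorem~\ref{thm:xy-det} is an identity of polynomials with rational coefficients. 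After multiplying through by $\prod_r\binom{n+1}{r}$ it is an identity over $\Z$, so we may specialize at integers and reduce mod $p$. Every $\binom{n+1}{r}$ with $0\le r\le n+1<p$ is a $p$-adic unit, so the fractions make sense modulo $p$.

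Next I compute the specialized factors. For $x_i=i$ we get $\Delta(x)=\prod_{1\le i<j\le n}(j-i)=\prod_{j=1}^{n-1} j!$. The same value holds for $y_j=j$. For $y_j=-j$ each factor changes sign, so $\Delta(y)=(-1)^{n(n-1)/2}\prod_{j=1}^{n-1}j!$. The scalar in front of $\det C$ is therefore
$$
(-1)^{n(n-1)/2}\Bigl(\prod_{j=1}^{n-1}j!\Bigr)^2\prod_{r}\binom{n+1}{r}=(-1)^{n(n-1)/2}P_n
$$
in the $+$ case. In the $-$ case the two signs $(-1)^{n(n-1)/2}$ cancel and the scalar is $P_n$. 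By Lemma~\ref{lem:scalar-factor}, $P_n\equiv\varepsilon_p$, which accounts for the prefactors in the statement.

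It remains to identify $C(x,y)$ with $A_p^{\pm}$. Using $\stir{m}{k}=e_{m-k}(1,\ldots,m-1)$ with $m=n+1$, we get $e_k(1,\ldots,n)=\stir{n+1}{n+1-k}$. By homogeneity, $e_k(-1,\ldots,-n)=(-1)^k\stir{n+1}{n+1-k}$. Both identities remain valid for $k<0$ or $k>n$ thanks to the stated conventions; in particular $\stir{n+1}{0}=0$ matches $e_{n+1}=0$. Substituting into the entries of $C$:
\begin{itemize}
\item $e_{r+1-i}(y)$ becomes $(\pm1)^{r+1-i}\stir{n+1}{n-r+i}$;
\item $e_{n-r+j-1}(x)$ becomes $\stir{n+1}{r-j+2}$.
\end{itemize}
This gives exactly $(A_p^{+})_{ij}$ and $(A_p^-)_{ij}$, and combining with the previous paragraph proves both congruences.

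The argument is essentially bookkeeping. The step needing the most care is the sign accounting: tracking $\Delta(y)$ under $y_j=-j$, checking that the index shifts in the Stirling numbers match the definition of $A_p^{\pm}$ exactly, and confirming that the boundary cases $r=0$ and $r=n+1$ are consistent with the zero conventions.
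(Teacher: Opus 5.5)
Your proposal is correct and follows the paper's proof essentially step for step. Both arguments use Euler's criterion to pass to $\det[(i\pm j)^{n+1}]$, specialize Theorem~\ref{thm:xy-det} at $x_i=i$, $y_j=\pm j$ with the same Vandermonde sign bookkeeping and the substitution $e_k(1,\ldots,n)=\stir{n+1}{n+1-k}$, and invoke Lemma~\ref{lem:scalar-factor} to replace $P_n$ by $\varepsilon_p$. Your extra checks fill in details the paper leaves implicit: the case $p\mid m$ in Euler's criterion, the $p$-integrality of $\binom{n+1}{r}^{-1}$, and boundary conventions such as $\stir{n+1}{0}=0$.
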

\begin{proof}
By Euler's criterion, we have
\begin{equation*}
D_p^{\pm}
\equiv
\det[(i\pm j)^{n+1}]_{1\leq i,j\leq n}
\pmod p.
\end{equation*}

For \(D_p^+\), we apply Theorem~\ref{thm:xy-det} with
\(x_i=i\) and \(y_j=j\) for \(1\leq i,j \leq n\).  
Since $\Delta(1,2,\ldots,n) = \prod_{j=1}^{n-1}j!$, substituting the symmetric polynomials of $1,2,\ldots,n$ by unsigned Stirling numbers, we get 
$$
D_p^+
\equiv
(-1)^{n(n-1)/2}P_n\det A_p^+ 
\pmod p.
$$
The result for \(D_p^+\) now follows from Lemma~\ref{lem:scalar-factor}.

Similarly, for \(D_p^-\), we apply Theorem~\ref{thm:xy-det} with
\(x_i=i\) and \(y_j=-j\) for \(1\leq i,j\leq n\). In this case, we have
$$
\Delta(-1,-2,\ldots,-n)
=
(-1)^{n(n-1)/2}\Delta(1,2,\ldots,n),
$$
and
$$
e_{r+1-i}(-1,-2,\ldots,-n)
=
(-1)^{r+1-i}e_{r+1-i}(1,2,\ldots,n).
$$
Substituting these into Theorem~\ref{thm:xy-det} and using Lemma~\ref{lem:scalar-factor} again, we get 
$$
D_p^-
\equiv
P_n\det A_p^- \equiv \varepsilon_p\det A_p^-
\pmod p.
$$
\end{proof}

\section{Congruences for \(A_p^{\pm}\) using Bernoulli numbers}
The next
lemma expresses the matrices \(A_p^{\pm}\) in Corollary~\ref{cor:sun-det-stirling} modulo \(p\) in terms of Bernoulli numbers and
values of Bernoulli polynomials at \(1/2\).

\begin{lemma}
\label{lem:entries-Apm}
Let \(p>3\) be a prime, \(n=(p-1)/2\), and \(A_p^{\pm}\) be the matrices defined in Corollary~\ref{cor:sun-det-stirling}.
Let \(B_k(x)\) denote the \(k\)-th Bernoulli polynomial, and 
\(B_k=B_k(0)\) be the \(k\)-th Bernoulli number.  Then we have
$$
A_p^+
\equiv
\begin{pmatrix}
\dfrac32 B_n
&
\dfrac38\bigl(B_{n+1}-B_n\bigr)
\\[6pt]
(-1)^nB_{n-1}
&
\dfrac32 B_n
\end{pmatrix} \pmod p,
$$
and
$$
A_p^-
\equiv
\begin{pmatrix}
\dfrac32 B_n\left(\dfrac12\right)
-\dfrac14 B_{n-1}\left(\dfrac12\right)
&
\dfrac1{32}B_{n-1}\left(\dfrac12\right)
+\dfrac38 B_{n+1}\left(\dfrac12\right)
\\[8pt]
-B_{n-1}\left(\dfrac12\right)
&
\dfrac32 B_n\left(\dfrac12\right)
+\dfrac14 B_{n-1}\left(\dfrac12\right)
\end{pmatrix} \pmod p,
$$
where the rational numbers are interpreted modulo \(p\).
\end{lemma}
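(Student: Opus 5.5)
Each entry of $A_p^{\pm}$ is a convolution of two rows of Stirling numbers of the first kind weighted by $1/\binom{n+1}{r}$. My plan is to handle the binomial weight first, then turn the convolutions into coefficients of power series that simplify modulo $p$.

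\emph{Removing the binomial weight.} Modulo $p$ with $n+1=(p+1)/2\equiv 1/2$, I would simplify $1/\binom{n+1}{r}$. Since $n+1-k\equiv -\tfrac12-k+p\cdot(\ldots)$, we have $\binom{n+1}{r}\equiv\binom{-1/2}{r}\cdot(\text{correction})$. More useful is to write $\binom{n+1}{r}^{-1}=r!\,(n+1-r)!/(n+1)!$. Then each summand $\stir{n+1}{n-r+i}\stir{n+1}{r-j+2}/\binom{n+1}{r}$ splits into a product of a function of $r$ and a function of $n+1-r$, namely $\frac{r!}{\cdot}\stir{n+1}{r-j+2}$ times $(n+1-r)!\stir{n+1}{n-r+i}$, divided by $(n+1)!$. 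So each entry becomes a coefficient in a product of two exponential-type generating functions.

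\emph{Generating functions.} I would use the classical identity
\[
\sum_{m\ge k}\stir{m}{k}\frac{t^{m-k}}{m!}\cdot k!=(-\log(1-t)/t)^k,
\]
or, more conveniently, the expansion of $\stir{m}{m-k}$ as a polynomial in $m$, which is given by Nörlund/Bernoulli polynomials of order $m$: $\stir{m}{m-k}=\binom{m-1}{k}B_k^{(m)}$ up to sign. Here $B^{(m)}_k$ has generating function $F(t)^m$ with $F(t)=t/(e^t-1)$, matching the introduction's hint. With $m=n+1$, the entries involve only $\stir{n+1}{n+1-s}$ for small $s$ (giving $B_s^{(n+1)}$) and for $s$ near $n+1$ (giving $B_{s}^{(n+1)}$ with $s\approx n$).

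Modulo $p$, the key simplification is
\[
F(t)^{n+1}=F(t)^{(p+1)/2}\equiv F(t)^{1/2}\cdot F(t)^{p/2},
\]
together with $F(t)^p\equiv F(t^p)$ modulo $p$ for the coefficients of degree $<p$ (after clearing denominators carefully). The result is that coefficients of degree about $n$ in $F(t)^{n+1}$ reduce to coefficients of $F(t)^{1/2}$ or $F(t)^{-1/2}$-type series. I would then use
\[
F(t)e^{t/2}=\frac{t}{2\sinh(t/2)}=\sum_k B_k(\tfrac12)\frac{t^k}{k!}
\]
and $F(t)=\sum B_k t^k/k!$ to identify the surviving coefficients. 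The $\pm$ sign for $y_j=-j$ replaces $t$ by $-t$ in one factor, which is why $B_k(\tfrac12)$ appears for $A_p^-$ and $B_k$ appears for $A_p^+$. Degrees $n-1$, $n$ and $n+1$ arise from the index shifts $i,j\in\{1,2\}$.

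\emph{Expected difficulty.} The obstacle is the middle step: organizing the sum over $r$ so that only the boundary terms survive. These are $r$ small, where $\stir{n+1}{r-j+2}$ is huge-degree, and $r$ near $n+1$. Terms with both Stirling numbers in the "middle" must vanish modulo $p$, because $\stir{n+1}{k}\equiv$ the coefficients of $t(t+1)\cdots(t+n)$, and $\prod_{k\le n}(t+k)\prod_{k\le n}(t-k)\equiv t^{p}-t$ modulo $p$. I would exploit this identity directly. Replacing $\prod(t+k)$ by $(t^{p-1}-1)/\prod_{k=1}^n(t-k)$ should collapse the convolution into the few low-order coefficients of an expansion in which $p$-adic denominators $(n+1)!$ cancel.

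After that, the matching of constants ($3/2$, $3/8$, $1/32$, and so on) is a routine but lengthy coefficient extraction. I would verify it numerically for $p=7,11,13$ as a check.
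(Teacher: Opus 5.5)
There is a genuine gap. You have most of the right ingredients: the N\"orlund relation $\stir{n+1}{n+1-s}=(-1)^s\binom ns B_s^{(n+1)}$ with generating function $F(t)^{n+1}$, a Frobenius-type reduction of powers of $F$ modulo $p$, and $H(t)=e^{t/2}F(t)$ for the minus case. But the step you call the main obstacle is resolved by a mechanism that does not exist.

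The sum over $r$ does not localize at the ends. Every index $s$ from $0$ to $n$ occurs, and the middle terms do not vanish modulo $p$. For $p=7$, $n=3$:
\[
(A_7^+)_{11}=6+\tfrac{66}{4}+11+\tfrac64=35\equiv0 \pmod 7 .
\]
Yet the middle terms are $\frac{66}{4}\equiv6$ and $11\equiv4\pmod 7$. The two end terms alone give $6+\frac64\equiv4\not\equiv0$. The identity $\prod_{k=1}^n(t+k)(t-k)\equiv t^{p-1}-1$ (not $t^p-t$) only constrains unweighted alternating convolutions $\sum_k(-1)^k\stir{n+1}{k+1}\stir{n+1}{m-k+1}$. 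It gives no control over the convolution weighted by $1/\binom{n+1}{r}$, which is exactly what the entries of $A_p^\pm$ are.

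The missing idea is how that weight becomes a generating-function operation. Writing $1/\binom{n+1}{r}=r!\,(n+1-r)!/(n+1)!$ does not by itself give a product of two usable series. You must combine it with the binomials from the N\"orlund relation:
\[
\binom nr\binom n{n-r}\Big/\binom{n+1}r=\frac{1+(n-r)}{n+1}\binom nr .
\]
The linear factor $1+(n-r)$ is $1+\theta$, with $\theta=t\,d/dt$, acting on one factor. So the whole sum over all $r$ is a single coefficient, e.g.
\[
(A_p^+)_{11}=(-1)^n\frac{n!}{n+1}[t^n]\,F(t)^{n+1}(1+\theta)F(t)^{n+1},
\]
and for $A_p^-$ the differentiated factor is $F(-t)^{n+1}$. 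Then $\theta F(t)=F(t)(1-F(-t))$ rewrites the $(1,1)$ entries as $[t^n]$ of $\bigl(1+\frac1{n+1}-F(\mp t)\bigr)$ times $F(t)^{2n+2}$, respectively $(F(t)F(-t))^{n+1}$. Only at this point does the reduction $F(t)^{2n+2}\equiv F(t)$, $(F(t)F(-t))^{n+1}\equiv e^{t/2}F(t)$ modulo $(p,t^{n+2})$ finish the job.

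Your per-factor congruence $F(t)^{n+1}\equiv F(t)^{1/2}\pmod{(p,t^{p-1})}$ is valid and would work equally well here, since $F(t)^{1/2}F(-t)^{1/2}=e^{t/2}F(t)$. It only helps after the weight has been converted into $\theta$, though.

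Finally, the constants $\frac32,\frac38,\frac14,\frac1{32}$ come from further $\theta$-identities, such as $(1-F(-t))^2F=\frac12(\theta^2F+\theta F-t\theta F+tF)$ and $(1-F(t))(1-F(-t))H=\frac12\theta^2H+\frac12\theta H-\frac{t^2}{8}H$. Deferring them to a ``routine'' extraction checked numerically for $p=7,11,13$ does not prove the lemma.
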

\begin{proof}
Let $F(t)=\frac{t}{e^t-1}$, and $\theta=t\frac{d}{dt}$ be the Euler operator. By the definition of Bernoulli numbers, we have 
\begin{equation}\label{eq:Fseries}
  F(t) = \sum_{k\ge0}B_k\frac{t^k}{k!}.
\end{equation}
We also have the following obvious identities
\begin{align}
F(t)&=F(-t)-t \label{eq:ffm} \\
\theta F(t)&=F(t)(1-F(-t)), \label{eq:thetaf}\\
\theta F(-t)&=F(-t)(1-F(t)) \label{eq:thetafm}.
\end{align}

For a formal power series \(G(t)=\sum_{j\ge0}c_jt^j\), let $[t^r]G(t)=c_r$ be the coefficient of $t^r$.

We divide the proof into three steps. In the first, we rewrite the
entries of \(A_p^\pm\) as coefficients of power series involving
\(F(t)\) and \(F(-t)\). In the second, we reduce the high powers
\(F(t)^{2n+2}\) and \((F(t)F(-t))^{n+1}\) modulo \(p\).
In the third step, we express the resulting power series as linear combinations of power series whose coefficients are Bernoulli numbers or values of Bernoulli polynomials at $1/2$, and then extract the required coefficients.

\noindent\underline{Step 1.}
The higher-order Bernoulli numbers $B_r^{(n+1)}$ are defined by
\[
F(t)^{n+1}
=
\sum_{r\ge0}B_r^{(n+1)}\frac{t^r}{r!}.
\]
We have the well-known relation between higher order Bernoulli numbers and Stirling numbers
\begin{equation}
\label{eq:stirling-B-higher}
\stir{n+1}{n-r+1}
=
(-1)^r\binom nr B_r^{(n+1)}
\qquad(0\le r\le n).
\end{equation}

We first consider \((A_p^+)_{11}\). By the definition of $A_p^+$ and \eqref{eq:stirling-B-higher}, we have
\begin{align*}
(A_p^+)_{11}
&=
(-1)^n
\sum_{r=0}^{n}
\frac{\binom nr\binom n{n-r}}{\binom{n+1}{r}}
B_r^{(n+1)}B_{n-r}^{(n+1)} \\
&=
(-1)^n\frac{n!}{n+1}
\sum_{r=0}^{n}
\left\{
\frac{B_r^{(n+1)}}{r!}
\frac{B_{n-r}^{(n+1)}}{(n-r)!}
+
\frac{B_r^{(n+1)}}{r!}
\frac{(n-r)B_{n-r}^{(n+1)}}{(n-r)!}
\right\} \\
&=
(-1)^n\frac{n!}{n+1}[t^n]\,
F(t)^{n+1}(1+\theta)\bigl(F(t)^{n+1}\bigr) \\
&= (-1)^n n![t^n]
\left(1+\frac1{n+1}-F(-t)\right)F(t)^{2n+2},
\end{align*}
where the second equality follows from
\[
\frac{\binom nr\binom n{n-r}}{\binom{n+1}{r}}
=
\frac{1+(n-r)}{n+1}\binom nr,
\]
and the last follows from \eqref{eq:thetaf}.

The same method gives the following formulas:
\begin{equation}
\label{eq:Aplus-coeff-forms}
\begin{aligned}
(A_p^+)_{11}
&=
(-1)^n n![t^n]
\left(1+\frac1{n+1}-F(-t)\right)F(t)^{2n+2},\\
(A_p^+)_{12}
&=
(-1)^{n+1}(n+1)![t^{n+1}]
(1-F(-t))^2F(t)^{2n+2},\\
(A_p^+)_{21}
&=
(-1)^{n-1}\frac n{n+1}(n-1)![t^{n-1}]F(t)^{2n+2},\\
(A_p^+)_{22}
&=
(-1)^n n![t^n]
\left(1+\frac1{n+1}-F(-t)\right)F(t)^{2n+2}.
\end{aligned}
\end{equation}

Next we consider \((A_p^-)_{11}\). 
By \eqref{eq:stirling-B-higher}, we have
\begin{align*}
(A_p^-)_{11}
&=
\sum_{r=0}^{n}
(-1)^r
\frac{r+1}{n+1}\binom nr
B_{n-r}^{(n+1)}B_r^{(n+1)} \\
&= \frac{n!}{n+1}
\sum_{r=0}^{n}
\left\{
\frac{B_{n-r}^{(n+1)}}{(n-r)!}
\frac{(-1)^rB_r^{(n+1)}}{r!}
+
\frac{B_{n-r}^{(n+1)}}{(n-r)!}
\frac{r(-1)^rB_r^{(n+1)}}{r!}
\right\} \\
&=\frac{n!}{n+1}[t^n]\,
F(t)^{n+1}(1+\theta)\bigl(F(-t)^{n+1}\bigr) \\
&=n![t^n]\left(1+\frac1{n+1}-F(t)\right)
(F(t)F(-t))^{n+1}
\end{align*}
where we use \eqref{eq:thetafm} for the last equality.

Similarly, we have
\begin{equation}
\label{eq:Aminus-coeff-forms}
\begin{aligned}
(A_p^-)_{11}
&=
n![t^n]\left(1+\frac1{n+1}-F(t)\right)
(F(t)F(-t))^{n+1},\\
(A_p^-)_{12}
&=
(n+1)![t^{n+1}]
(1-F(t))(1-F(-t))(F(t)F(-t))^{n+1},\\
(A_p^-)_{21}
&=
\frac n{n+1}(n-1)![t^{n-1}]
(F(t)F(-t))^{n+1},\\
(A_p^-)_{22}
&=
n![t^n]\left(1+\frac1{n+1}-F(-t)\right)
(F(t)F(-t))^{n+1}.
\end{aligned}
\end{equation}

\noindent\underline{Step 2.}
Since $n+1=\frac{p+1}{2}<p-1$ for $p>3$, the von Staudt--Clausen theorem shows
that the coefficients of \(F(t)\) up to degree \(n+1\) are
\(p\)-integral. We work with the truncation of \(F(t)\) modulo
\(t^{n+2}\). 
Writing $F(t)\equiv 1+tG(t)$ modulo
\(t^{n+2}\) where $G(t)$ has $p$-integral coefficients, we have
\begin{equation}\label{eq:mod-F-power}
\begin{aligned}
F(t)^{2n+2}
&=F(t)^{p+1}
 \equiv F(t)(1+tG(t))^p\\
&\equiv F(t)\bigl(1+t^pG(t)^p\bigr)
 \equiv F(t)\pmod{(p,t^{n+2})},
\end{aligned}
\end{equation}
since $p\ge n+2$.

Let $H(t)=e^{t/2}F(t)$. By the definition of Bernoulli polynomials, we have
\begin{equation}\label{eq:Hseries}
H(t) = \sum_{k\ge0}B_k\!\left(\frac12\right)\frac{t^k}{k!},
\end{equation}
where $B_k(x)$ is the $k$th Bernoulli polynomial.

Since $(n+1) \equiv \frac12 \pmod p$ and $k!$ is invertible modulo $p$ for
$k\le n+1$, we have
\[
e^{(n+1)t}\equiv e^{t/2}\pmod{(p,t^{n+2})}.
\]
Using \(F(-t)=e^tF(t)\), we obtain
\begin{equation}
\label{eq:mod-FFminus-power}
(F(t)F(-t))^{n+1}
=
e^{(n+1)t}F(t)^{2n+2}
\equiv H(t) \pmod{(p,t^{n+2})}.
\end{equation}

\noindent\underline{Step 3.}
After substituting \eqref{eq:mod-F-power} into
\eqref{eq:Aplus-coeff-forms}, we only need to extract the coefficients
of \(F(t)\), \(F(t)F(-t)\), and \((1-F(-t))^2F(t)\).
The coefficients of $F(t)$ are given in \eqref{eq:Fseries}. Now we consider the other two power series.

By \eqref{eq:thetaf}, we have $F(t)F(-t)=F(t)-\theta F(t)$
which gives the following identity by taking coefficients
\begin{equation}
\label{eq:coeff-ordinary}
n![t^n]F(t)F(-t)=(1-n)B_n.
\end{equation}

Applying $\theta$ once more to \eqref{eq:thetaf} and rearranging using \eqref{eq:ffm}, 
we have
\[
(1-F(-t))^2F(t)
=
\frac12\{\theta^2F(t)+\theta F(t)-t\theta F(t)+tF(t)\}.
\]
Taking the coefficient of \(t^{n+1}\), multiplying by \((-1)^{n+1}\), and using \(B_m=0\) for odd \(m>1\),
we get
\begin{equation}
\label{eq:coeff-Aplus12}
\begin{aligned}
&(-1)^{n+1}(n+1)![t^{n+1}]
(1-F(-t))^2F(t)\\
&\qquad =
\frac{n+1}{2}\left((n+2)B_{n+1}+(n-1)B_n\right).
\end{aligned}
\end{equation}

Similarly, after substituting \eqref{eq:mod-FFminus-power} into
\eqref{eq:Aminus-coeff-forms}, we need to extract the coefficients
of \(H(t)\), \(F(-t)H(t)\), \(F(t)H(t)\), and
\((1-F(t))(1-F(-t))H(t)\).
The coefficients of $H(t)$ are given in \eqref{eq:Hseries}. We now express the other three power series using $H(t), \theta H(t)$ and $\theta^2 H(t)$.

Applying $\theta$ to $H(t)$, we have 
\begin{equation}
\label{eq:thetaH}
\frac{\theta H(t)}{H(t)}
=
\frac t2+1-F(-t),
\end{equation}
which gives 
\begin{equation}\label{eq:FminusH}
F(-t)H(t)=\left(1+\frac t2\right)H(t)-\theta H(t).
\end{equation}
Taking the coefficient of $t^n$, we get
\begin{equation}
\label{eq:coeff-FminusH}
n![t^n]F(-t)H(t)
=
(1-n)B_n\!\left(\frac12\right)
+
\frac n2B_{n-1}\!\left(\frac12\right).
\end{equation}

Substituting \eqref{eq:ffm} into \eqref{eq:FminusH} and taking the coefficient, we have
\begin{equation}
\label{eq:coeff-FH}
n![t^n]F(t)H(t)
=
(1-n)B_n\!\left(\frac12\right)
-
\frac n2B_{n-1}\!\left(\frac12\right).
\end{equation}

It remains to consider $(1-F(t))(1-F(-t))H(t)$.
By \eqref{eq:thetaH} and \eqref{eq:ffm}, we have 
\[
1-F(t)=\frac{\theta H(t)}{H(t)}+\frac t2,
\qquad
1-F(-t)=\frac{\theta H(t)}{H(t)}-\frac t2.
\]
Multiplying these identities gives
\[
(1-F(t))(1-F(-t))H(t)
=
\frac{(\theta H(t))^2}{H(t)}-\frac{t^2}{4}H(t).
\]

Applying $\theta$ to \eqref{eq:FminusH} and rearranging using \eqref{eq:thetafm}, we have 
\[
\frac{(\theta H(t))^2}{H(t)}
=
\frac12\theta^2H(t)
+\frac12\theta H(t)
+\frac{t^2}{8}H(t).
\]
Substituting this into the previous formula yields
\[
(1-F(t))(1-F(-t))H(t)
=
\frac12\theta^2H(t)
+
\frac12\theta H(t)
-
\frac{t^2}{8}H(t).
\]
Taking the coefficient of \(t^{n+1}\), we finally get
\begin{equation}
\label{eq:coeff-crossH}
\begin{aligned}
&(n+1)![t^{n+1}]
(1-F(t))(1-F(-t))H(t)\\
&\qquad =
\frac{(n+1)(n+2)}2B_{n+1}\!\left(\frac12\right)
-
\frac{n(n+1)}8B_{n-1}\!\left(\frac12\right).
\end{aligned}
\end{equation}

Substituting the reductions from Step 2 and preceding coefficient identities into
\eqref{eq:Aplus-coeff-forms} and \eqref{eq:Aminus-coeff-forms},
and using $n\equiv-1/2 \pmod p$ gives the stated congruences of $A_p^{\pm}$.
\end{proof}

\section{Proof of Theorem \ref{thm:Dp}}
We combine Corollary~\ref{cor:sun-det-stirling} and
Lemma~\ref{lem:entries-Apm} to prove Theorem \ref{thm:Dp}.
\begin{proof}[Proof of Theorem \ref{thm:Dp}]
Recall that by Corollary~\ref{cor:sun-det-stirling} we have
$$
D_p^+
\equiv
(-1)^{n(n-1)/2}\varepsilon_p\det A_p^+
\pmod p,
\qquad 
D_p^-
\equiv
\varepsilon_p\det A_p^-
\pmod p,
$$
where $\varepsilon_p$ is given by Lemma \ref{lem:scalar-factor}.

By Lemma~\ref{lem:entries-Apm} and the vanishing of \(B_m\) for odd
\(m>1\), we have
\[
\det A_p^+
\equiv
\begin{cases}
\dfrac94 B_n^2, & n\ \text{even},\\[5pt]
\dfrac38 B_{n-1}B_{n+1}, & n\ \text{odd}.
\end{cases}
\]
The value of \((-1)^{n(n-1)/2}\) is determined by \(p\bmod 8\).
This gives the stated formula for \(D_p^+\).

For \(D_p^-\), put \(C_m=B_m(1/2)\). A direct calculation from
Lemma~\ref{lem:entries-Apm} gives
\[
\det A_p^-
\equiv
\frac94 C_n^2
+\frac38 C_{n-1}C_{n+1}
-\frac1{32}C_{n-1}^2.
\]
We also have the standard identity
$C_m=(2^{1-m}-1)B_m$, while Euler's criterion gives
$2^n\equiv\left(\frac2p\right)\pmod p$.
Since \(\left(\frac2p\right)\) is determined by \(p\bmod 8\) and
\(B_m=0\) for odd \(m>1\), considering the four residue classes of
\(p\) modulo \(8\) gives the stated formula for \(D_p^-\).
\end{proof}

\subsection*{Acknowledgments}
I thank Professor Sun Zhi-Wei for very helpful discussions and comments.

The author was supported by the National Natural
Science Foundation of China under Grant No.~12371031.

\subsection*{AI use declaration} During preliminary exploration of this work, ChatGPT-5.5 Thinking suggested reducing the $n \times n$ determinant to a $2 \times 2$ determinant and exploring its connection to Bernoulli numbers. The author independently developed all results, verified all proofs, wrote the manuscript, and take full responsibility for its contents.


\begin{thebibliography}{00}
\bibitem{AnkenyArtinChowla1952}
N. C. Ankeny, E. Artin and S. Chowla,
\emph{The class-number of real quadratic number fields},
Ann. of Math. (2) \textbf{56} (1952), 479--493.

\bibitem{Chapman2004}
R. Chapman,
\emph{Determinants of Legendre symbol matrices},
Acta Arith. \textbf{115} (2004), no. 3, 231--244.

\bibitem{GrinbergSunZhao2022}
D. Grinberg, Z.-W. Sun and L. Zhao,
\emph{Proof of three conjectures on determinants related to quadratic residues},
Linear Multilinear Algebra \textbf{70} (2022), no. 19, 3734--3746.


\bibitem{Macdonald1995}
I. G. Macdonald,
\emph{Symmetric Functions and Hall Polynomials},
2nd ed.,
Oxford University Press, Oxford, 1995.

\bibitem{Mordell1961}
L. J. Mordell,
\emph{The congruence \(((p-1)/2)!\equiv \pm 1 \pmod p\)},
Amer. Math. Monthly \textbf{68} (1961), 145--146.

\bibitem{Reinhart2024}
A. Reinhart,
\emph{A counterexample to the Conjecture of Ankeny, Artin and Chowla},
arXiv:2410.21864v3, 2025.

\bibitem{Sun2019}
Z.-W. Sun,
\emph{On some determinants with Legendre symbol entries},
Finite Fields Appl. \textbf{56} (2019), 285--307.

\end{thebibliography}
\end{document}